\providecommand{\U}[1]{\protect\rule{.1in}{.1in}}
\providecommand{\U}[1]{\protect\rule{.1in}{.1in}}
\providecommand{\U}[1]{\protect\rule{.1in}{.1in}}
\providecommand{\U}[1]{\protect\rule{.1in}{.1in}}
\providecommand{\U}[1]{\protect\rule{.1in}{.1in}}
\newtheorem{theorem}{Theorem}[section]
\theoremstyle{plain}
\newtheorem{definition}{Definition}
\newtheorem{lemma}{Lemma}[section]
\numberwithin{equation}{section}
\newcommand*\diff{\mathop{}\!\mathrm{d}}
\DeclareMathOperator{\tr}{tr}
\DeclareMathOperator{\Det}{det}
\DeclareMathOperator{\diag}{diag}
\begin{document}
\title[Rigidity of Solutions to Hessian Quotient Flows]
{Rigidity of entire convex self-shrinking solutions to Hessian quotient flows
}

\author{Wenlong Wang}

\address{School of Mathematical Sciences\\
Peking University\\
Haidian, Beijing, 100871\\
P.R.China }
\email{wwlpkumath@yahoo.com}         


\date{Received: date / Accepted: date}

\maketitle

\begin{abstract}
We prove that all entire smooth strictly convex self-shrinking solutions on $\mathbb{R}^n$ to the Hessian quotient flows must be quadratic. This generalizes the rigidity theorem for entire self-shrinking solutions to the Lagrangian mean curvature flow in pseudo-Euclidean space due to Ding-Xin \cite{DX}. Moreover, we show that our argument works for a larger class of equations. In particular, we obtain rigidity results for entire self-shrinking solutions on $\mathbb{C}^n$ to the K\"{a}hler-Ricci flow under certain conditions.
\end{abstract}

\section{Introduction}
For a $n$-dimensional symmetric matrix $\mathcal{B}$, let $\lambda=(\lambda_1,...,\lambda_n)$ denote the eigenvalues of $\mathcal{B}$. Let $\sigma_l(\mathcal{B})$ be the $l$-th elementary symmetric polynomial of $\lambda$ given by
\begin{eqnarray}
    &&\sigma_l\left(\mathcal{B}\right)=\sum_{i_1<...<i_l}\lambda_{i_1}\cdot\cdot\cdot\lambda_{i_l}\,\,\,\,\mbox{$(1\leq l\leq n)$}\nonumber; \\
    &&\sigma_0\left(\mathcal{B}\right)=1. \nonumber
 \end{eqnarray}
We say $\mathcal{B}$ is $k$-positive if $\sigma_{l}(\mathcal{B})>0$ for all $0\leq l\leq k$. Let $0\leq n_2<n_1\leq n$, for any $n_1$-positive matrix $\mathcal{B}$, we define the quotient $q_{n_1,n_2}(\mathcal{B})$ by
\begin{equation}
q_{n_1,n_2}\left(\mathcal{B}\right)=\frac{\sigma_{n_1}\left(\mathcal{B}\right)}{\sigma_{n_2}\left(\mathcal{B}\right)}. \nonumber
\end{equation}

In the present paper, we prove the following main theorem.
\begin{theorem}\label{HQ}
Let $u$ be an entire smooth strictly convex solution on $\mathbb{R}^{n}$  to the Hessian quotient equation 
\begin{equation}\label{HQeq}
\ln q_{n_1,n_2}\left(D^2u(x)\right)=\frac{1}{2}x\cdot Du\left(x\right)-u\left(x\right).
\end{equation}
Then $u$ is quadratic.
\end{theorem}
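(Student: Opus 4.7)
The plan is to use a Bernstein-type argument in the spirit of Ding--Xin~\cite{DX}, adapted to the Hessian quotient operator. Write the equation as $F(D^2u)=V$ where $F:=\ln q_{n_1,n_2}$ and $V:=\tfrac12 x\cdot Du-u$. Since $u$ is strictly convex, $D^2u$ lies in the open positive cone, on which $F$ is a smooth concave function (a consequence of the Newton--Maclaurin inequalities applied to $\sigma_{n_1}/\sigma_{n_2}$), and the coefficients $F^{ij}:=\partial F/\partial u_{ij}$ form a positive-definite matrix. Let $L:=F^{ij}\partial_i\partial_j$ be the linearization and let $\mathcal{L}:=L-\tfrac12 x\cdot\nabla$ denote the drift Laplacian naturally attached to the self-shrinker.

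Two structural identities drive the argument. Euler's identity applied to the homogeneous function $\sigma_{n_1}/\sigma_{n_2}$ of degree $n_1-n_2$ gives $F^{ij}u_{ij}=n_1-n_2$, and differentiating the equation once produces $F^{ij}u_{ijk}=V_k=\tfrac12(x_\ell u_{\ell k}-u_k)$. These yield $\mathcal{L}u=(n_1-n_2)-u-V$ and $\mathcal{L}V=0$; differentiating a second time and invoking the concavity inequality $F^{ij,pq}\eta_{ij}\eta_{pq}\le 0$ produces a second-order differential inequality for scalar invariants of the Hessian. The crux is to combine these ingredients into an auxiliary function $\Phi$, built from $u$, $V$, $|x|^2$, and $|Du|^2$, such that $\mathcal{L}\Phi\ge 0$; the concavity term generated by second differentiation is what allows one to absorb the undesirable cross terms.

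Once subharmonicity of $\Phi$ is established, a Liouville-type argument closes the proof. Testing the inequality $\mathcal{L}\Phi\ge 0$ against $\Phi\,e^{-|x|^2/4}$ and integrating by parts gives, modulo boundary terms at infinity that must be controlled via the self-shrinker structure, an identity forcing $F^{ij}\Phi_i\Phi_j\equiv 0$; positive-definiteness of $F^{ij}$ then yields $\nabla\Phi\equiv 0$. Tracing back, the concavity inequality must hold with equality, and together with the strict convexity of $u$ this forces $u_{ijk}\equiv 0$. Hence $D^2u$ is constant and $u$ is quadratic.

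The main obstacle is the construction of $\Phi$: in the Monge--Amp\`ere case $(n_1,n_2)=(n,0)$, many traces such as $\mathrm{tr}(F^{ij})$ are constant, but for a general Hessian quotient they depend on the eigenvalues of $D^2u$, so $\Phi$ must be engineered using the finer algebraic structure of $q_{n_1,n_2}$ and the Newton--Maclaurin inequalities. A secondary difficulty is the justification of the integration by parts without a priori growth bounds, since $F^{ij}$ may degenerate as $D^2u$ varies; this is likely handled via a cutoff/approximation argument against the Gaussian weight, combined with bounds extracted from the self-shrinker equation itself.
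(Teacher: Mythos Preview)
Your proposal is a strategy sketch, not a proof: the two steps you yourself flag as ``obstacles'' are exactly the content of the theorem, and you leave both open. You do not construct the auxiliary function $\Phi$; you only assert that it ``must be engineered'' from the Newton--Maclaurin structure. And you do not justify the integration by parts against $e^{-|x|^2/4}$: note that this weight does \emph{not} make $\mathcal{L}=F^{ij}\partial_i\partial_j-\tfrac12 x\cdot\nabla$ formally self-adjoint unless $F^{ij}=\delta^{ij}$, so the divergence-form identity will spit out extra terms involving $\partial_j F^{ij}$ that you have no control over. In the Monge--Amp\`ere case Ding--Xin exploit that $F^{ij}=u^{ij}$ is the inverse of a Riemannian metric, giving a genuine Laplace--Beltrami operator; for a general Hessian quotient this structure is absent and the coefficients $F^{ij}$ can degenerate without bound. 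Your fallback ``bounds extracted from the self-shrinker equation itself'' is precisely what has to be proved.

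The paper takes a different route that avoids both difficulties. First it obtains a priori growth control \emph{before} any PDE argument on the phase: from $\exp V\le C(\Delta u)^{n_1-n_2}$ one derives a second-order ODE inequality for the spherical mean $S_u(r)$, and an elementary blow-up analysis forces $S_u(r)=O(r^2)$ and the ball mean of $\Delta u$ bounded. Convexity then upgrades this to $u=O(|x|^2)$, $|Du|=O(|x|)$, and $u^-=o(|x|^{1/2})$. Second, with these bounds in hand, the paper runs a \emph{pointwise} barrier/maximum-principle argument on the phase $\phi=V$, which satisfies $\mathcal{L}\phi=0$: a carefully built supersolution (involving $u$, $\phi$ itself, and $\ln(\hat u+M)$) forces $\phi$ to attain its supremum in a compact set, whence $\phi$ is constant by the strong maximum principle. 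Concavity of $F$ is never used; only the Euler identity $F^{ij}u_{ij}=n_1-n_2$ and the bound $\|DF\cdot D^2u\|\le n_1-n_2$ enter. If you want to salvage an integral approach, the spherical-mean growth estimates are the missing ingredient you would need first.
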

Any solution to \eqref{HQeq} leads to an entire self-shrinking solution 
\begin{equation}
v(x,t)=-tu\left(\frac{x}{\sqrt{-t}}\right) \nonumber
\end{equation}
to a parabolic Hessian quotient equation
\begin{equation}\label{HQflow}
v_t=\ln q_{n_1,n_2}\left(D^2v\right)
\end{equation}
on $\mathbb{R}^{n} \times (-\infty,0)$. In \cite{TW}, Trudinger and Wang used this flow under the fixed boundary condition to  study a Poincar\'{e} type inequality for Hessian integrals (see \cite{T} for the Monge-Amper\`{e} integral). In fact, \eqref{HQflow} is the negative logarithmic gradient flow of the following functional (cf. \cite{T,WX})
\begin{equation}
I_{n_1,n_2}(u)=\frac{1}{n_1+1}\int (-u)\cdot \sigma_{n_1}(D^2u)-\frac{1}{n_2+1}\int (-u)\cdot \sigma_{n_2}(D^2u).\nonumber
\end{equation}

When $n_1=n$, $n_2=0$, \eqref{HQeq} becomes the Monge-Amp\`ere equation
\begin{equation}\label{MAeq}
\ln\det D^2u(x)=\frac{1}{2}x\cdot Du(x)-u(x).
\end{equation}
Any solution to \eqref{MAeq} leads to an entire self-shrinking solution 
\begin{equation}
v(x,t)=-tu\left(\frac{x}{\sqrt{-t}}\right) \nonumber
\end{equation}
to a parabolic Monge-Amp\`ere equation
\begin{equation}
v_t=\ln\det D^2v\nonumber
\end{equation}
on $\mathbb{R}^{n} \times(-\infty,0)$ and the family of embeddings ${F}$$(x, t) = (x,Dv(x,t))$ from 
${\mathbb R}^{n}$ into ${\mathbb R}^{2n}$ solves the mean curvature flow with respect to the  
pseudo-Euclidean background metric $\diff s^2 = \sum_{i=1}^n\diff x^i\diff y^i$ on ${\mathbb R}^{2n}$ (cf. \cite{CM,H,Huisken,S}). 

Rigidity of entire smooth convex solutions to \eqref{MAeq} has been studied in \cite{CCY,DX,H,HW}. In \cite{CCY} and \cite{HW}, the authors proved that any smooth convex solution to \eqref{MAeq} must be quadratic under the condition that the Hessian is bounded below inversely quadratically. Later in \cite{DX}, Ding-Xin gave a complete improvement by dropping additional assumptions. 

The common part of the arguments in  \cite{CCY}, \cite{DX} and here is proving the constancy of a natural quantity, the phase $\phi=\ln\det D^2u$ ($\phi=\ln q_{n_1,n_2}(D^2u)$ in the Hessian quotient case). Then the homogeneity of the self-similar term on the right-hand side of the equation leads to the quadratic conclusion. The phase satisfies an elliptic equation without zeroth order term (shown below in \eqref{Lphase1}). In \cite{CCY}, using the inversely quadratic decay assumption, Chau-Chen-Yuan constructed a specific barrier function to force the supremum of the phase in $\mathbb{R}^n$ to be attained at some point. Then the strong maximum principle implies the constancy of the phase. In \cite{DX}, Ding-Xin first obtained the properness of $u$, then proved the constancy of the phase via the integral method.

Our approach is to construct a barrier function to force the supremum of the phase to be attained at some point. However, we cannot construct a specific barrier function as in \cite{CCY}, which requires the specific decay rate of the Hessian. We turn to estimate the growths of the solution $u$ and $|Du|$, then construct a non-concrete barrier function. To begin with, we establish a second order ordinary differential inequality for the spherical mean of $u$, a univariate function depending on the radius of the sphere. Then using some ODE techniques, we prove that the spherical mean of $u$ has at most a quadratic growth and the ball mean of $\Delta u$ is bounded. Combining these with the convexity of $u$, we obtain that $u$ has at most a quadratic growth, $|D u|$ has at most a linear growth and the negative part of $u$ has a sublinear growth. Having these estimates, we finally construct a suitable barrier function based on $u$ and $\phi$.

In fact, our argument for Theorem \ref{HQ} does not depend on the particular structure of \eqref{HQeq}. This enables us to generalize the rigidity result to a larger class of equations. 

Let $\mathcal{S}^n_+$ be the cone of $n$-dimensional positive-definite matrices. Let $F$ be a $C^1$ function defined on $\mathcal{S}^n_{+}$. For any $\mathcal{B}=\left(b_{ij}\right)\in\mathcal{S}^n_{+}$, define the coefficient matrix $DF$ by
\begin{equation*}
\left(DF\right)^{ij}\left(\mathcal{B}\right)=\frac{\partial F}{\partial b_{ij}}\left(\mathcal{B}\right).
\end{equation*}
\begin{theorem}\label{M}
Assume for any $\mathcal{B}\in\mathcal{S}^n_{+}$, $F$ satisfies the following conditions:
\begin{flalign}
&\,\,\mbox{\rm(i)}\,\,\,DF\left(\mathcal{B}\right)\,\,\mbox{is positive-definite;}\nonumber\\
&\,\mbox{\rm(ii)} \,\,\exp F\left(\mathcal{B}\right)\leq C\left[\left(\tr\mathcal{B}\right)^{k_1}+1\right]\,\,\,\mbox{for certain positive constants $k_1$ and $C$.}\,\,\nonumber\\
&\mbox{\rm(iii)}\,\,\Vert DF\left(\mathcal{B}\right)\cdot\mathcal{B}\Vert\leq k_2\,\,\,\mbox{for a certain  positive constant $k_2$.}\nonumber
\end{flalign}
Let $u$ be an entire smooth strictly convex solution on $\mathbb{R}^n$ to the equation
\begin{equation}\label{Meq}
F\left(D^2u\left(x\right)\right)=\frac{1}{2}x\cdot Du\left(x\right)-u\left(x\right).
\end{equation}
Then $u$ is quadratic.
\end{theorem}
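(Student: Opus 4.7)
The plan is to follow verbatim the outline the author sketches for Theorem \ref{HQ}: the \emph{phase} $\phi := F(D^2u)$ will be shown to satisfy a linear elliptic equation without zeroth-order term, and once we can force $\sup_{\mathbb{R}^n}\phi$ to be attained at a finite point, the strong maximum principle combined with Euler's identity will close the argument. Hypothesis (i) provides ellipticity, (ii) supplies the growth ingredient, and (iii) will power the barrier estimate.

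\textbf{Step 1 (the equation for the phase).} Set $L := (DF)^{ij}(D^2u)\partial_i\partial_j$, which is elliptic by (i). Differentiating \eqref{Meq} twice directly (without using the chain rule through $F$) gives $\phi_{ij} = \tfrac{1}{2}\sum_l x_l u_{lij}$, and combining with the chain-rule identity $\phi_l = (DF)^{ij}u_{ijl}$ yields
\begin{equation*}
L\phi = \tfrac{1}{2}\, x\cdot D\phi.
\end{equation*}
This is exactly the drift-diffusion equation needed for the strong maximum principle.

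\textbf{Step 2 (growth estimates).} By (ii), $F(D^2u) \leq k_1\log(\Delta u + 1) + C$, so \eqref{Meq} implies
\begin{equation*}
\tfrac{1}{2}\,x\cdot Du - u \leq k_1\log(\Delta u + 1) + C.
\end{equation*}
Let $\bar u(r)$ denote the spherical mean of $u$ over $\partial B_r$. Using $\overline{x\cdot Du}(r) = r\bar u'(r)$ and $\overline{\Delta u}(r) = \bar u''(r) + \frac{n-1}{r}\bar u'(r)$ together with Jensen's inequality applied to the concave function $\log$, I obtain a second-order ODE inequality for $\bar u$ alone. A power-counting heuristic (powers of $r$ on the left versus a logarithm on the right) forces $\bar u(r)\leq Cr^2+C$, and a further ODE manipulation gives boundedness of the ball mean of $\Delta u$. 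Convexity of $u$ then upgrades these averaged bounds to the pointwise estimates $u(x)\leq C|x|^2+C$, $|Du(x)|\leq C|x|+C$, and sublinear growth of the negative part $u_-$.

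\textbf{Step 3 (barrier, strong maximum principle, Euler).} Using these growth estimates, construct a barrier of the form $\Psi := \phi - \eta$, where $\eta$ is built non-explicitly from $u$, $|x|^2$, and small parameters, so that $\Psi(x)\to -\infty$ as $|x|\to\infty$ while $(L-\tfrac{1}{2}x\cdot D)\Psi \geq 0$ outside a sufficiently large ball. Hypothesis (iii)---the bound $\Vert DF(\mathcal{B})\cdot\mathcal{B}\Vert\leq k_2$---is precisely what renders the terms $(DF)^{ij}u_{jk}x_i$ arising in $L\eta$ harmless. The barrier forces $\sup\Psi$, hence $\sup\phi$, to be attained at a finite point; the strong maximum principle applied to the equation from Step 1 yields $\phi\equiv c$. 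Substituting into \eqref{Meq} gives $x\cdot D(u+c) = 2(u+c)$, so $u+c$ is Euler-homogeneous of degree two, and smoothness at the origin forces it to be a quadratic polynomial.

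\textbf{Main obstacle.} The crux is Step 3. Unlike in \cite{CCY}, the hypothesized inverse-quadratic decay of $D^2u$ is unavailable, so no explicit Pogorelov-type barrier can be written down; moreover, $F$ is kept abstract, so any $\eta$ must be built only from the data provided by Step 2. Balancing the quadratic upper growth of $u$, the merely sublinear decay of $u_-$, the positive-definiteness in (i), and the trace-type bound in (iii), in such a way that the sign of $(L-\tfrac{1}{2}x\cdot D)\eta$ comes out correctly, is the real content of the argument.
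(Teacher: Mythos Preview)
Your proposal is correct and follows essentially the same approach as the paper: derive the drift equation $\mathcal{L}\phi=0$ for the phase, obtain quadratic growth of $u$, linear growth of $|Du|$, and sublinear growth of $u^-$ via an ODE inequality for the spherical mean combined with convexity, and then build a non-explicit barrier from these estimates to trap $\sup\phi$ at a finite point. The paper's concrete barrier is $w=\varepsilon\{l\,g+K_4(\hat u+M)\}+\max_{\partial B_{R_0}}\phi$ with $l=x\cdot Du-u+M$, $g=\ln(\hat u+M)$, $\hat u=u-Du(0)\cdot x$, and hypothesis~(iii) is used exactly where you predict, namely to control the cross-term $2a^{ij}g_i l_j=\dfrac{2\hat u_i\,a^{ij}u_{js}\,x^s}{\hat u+M}$ by $2k_2|x|\,|D\hat u|/(\hat u+M)$.
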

Condition (i) guarantees the ellipticity of \eqref{Meq}. Conditions \rm(ii) and \rm(iii) say that \eqref{Meq} has exponential or super-exponential nonlinearity for the quadratic self-similar term on the right-hand side in a sense. We are about to show that some common operators satisfy above conditions.

Let us first verify that $\ln q_{n_1,n_2}$ satisfies these conditions. For condition \rm(i), $DF(\mathcal{B})$ is positive-definite when $\mathcal{B}$ is $n_1$-positive. Namely, equation \eqref{HQeq} is elliptic when $u$ is $n_1$-admissible (cf. \cite{CNS,TW,WX}). Since $u$ is strictly convex, it is $n_1$-admissible. We can also check condition (i) directly by diagonalizing $\mathcal{B}$ and using Newton's inequality (cf. \cite{HLP}).

 For condition (ii), also by Newton's inequality we have
\begin{equation}\label{HQ1}
q_{n_1,n_2}(\mathcal{B})\leq C(n,n_1,n_2)\left(\tr\mathcal{B}\right)^{n_1-n_2}.
\end{equation}

For condition (iii), since $q_{n_1,n_2}(\mathcal{B})$ is a homogeneous order $n_1-n_2$ function of $\mathcal{B}$, by Euler's homogeneous function theorem we have
\begin{equation}\label{HQ2}
\tr\left(D\ln q_{n_1,n_2}\left(\mathcal{B}\right)\cdot\mathcal{B}\right)=n_1-n_2.
\end{equation}
Because $\ln q_{n_1,n_2}$ is invariant under orthogonal transformations, $D\ln q_{n_1,n_2}\left(\mathcal{B}\right)$ and $\mathcal{B}$ can be diagonalized simultaneously. Thus $D\ln q_{n_1,n_2}\left(\mathcal{B}\right)$ commutes with $\mathcal{B}$. Then $D\ln q_{n_1,n_2}\left(\mathcal{B}\right)\cdot\mathcal{B}$ is positive-definite. Consequently, 
\begin{equation}\label{HQ3}
\Vert D\ln q_{n_1,n_2}\left(\mathcal{B}\right)\cdot\mathcal{B}\Vert<\tr\left(D\ln q_{n_1,n_2}\left(\mathcal{B}\right)\cdot\mathcal{B}\right)=n_1-n_2.
\end{equation}

We can verify that the operator $\tr\left(\arctan\mathcal{B}\right)$ also satisfies above three conditions. The corresponding equation
\begin{equation}\label{arctan}
\sum_{i=1}^n\arctan\lambda_i\left(x\right)=\frac{1}{2}x\cdot Du\left(x\right)-u\left(x\right)
\end{equation}
describes the potential of the self-shrinking solution $(x,Du(x))$ to the Lagrangian mean curvature flow in $\mathbb{R}^{2n}$ (cf. \cite{CCH,CCY,CM,DX,H,HW,Huisken,S}). In \cite{CCY}, 
Chau-Chen-Yuan first proved that any entire smooth solution to \eqref{arctan} on $\mathbb{R}^n$ must be quadratic.

The Hermitian counterpart of \eqref{MAeq} is the following complex Monge-Amp\`{e}re equation
\begin{equation}\label{KReq}
\ln\det\partial\bar{\partial}u\left(x\right)=\frac{1}{2}x\cdot Du\left(x\right)-u\left(x\right).
\end{equation}
Any solution to \eqref{KReq} leads to an entire self-shrinking solution 
\begin{equation}
v\left(x,t\right)  =-tu\left(\frac{x}{\sqrt{-t}}\right)\nonumber
\end{equation}
to a parabolic complex Monge-Amp\`ere equation 
\begin{equation}
v_t=\ln\det\partial\bar{\partial}v\nonumber
\end{equation}
on $\mathbb{C}^{n} \times (-\infty,0)$. Note that the above equation of $v$ is the potential equation of the K\"{a}hler-Ricci flow $\displaystyle\partial_tg_{\alpha\bar{\beta}}=-R_{\alpha\bar\beta}$. In fact, the corresponding metric $\left(u_{\alpha\bar{\beta}}\right)$ is a shrinking K\"{a}hler-Ricci (non-gradient) soliton (cf. \cite{CCY}).

Rigidity of entire solutions to \eqref{KReq} has been studied in \cite{CCY,DX,DLY,WW}. In \cite{DLY}, Drugan-Lu-Yuan proved that any complete (with respect to the corresponding K\"{a}her metric $\partial\bar\partial u$) solution has to be quadratic. In \cite{WW}, completeness assumption is removed for complex one dimensional case. Using our argument, we can obtain two new rigidity theorems which are described now.

We know $$\det\partial\bar\partial u=4^{-n}\sqrt{\det\left(D^2u+J^{\mathrm{T}}\cdot D^2u\cdot J\right)},$$
where $J$ denotes the standard complex structure of $\mathbb{R}^{2n}$ and $J^{\mathrm{T}}$ is the transpose of $J$ with $J^{\mathrm{T}}=-J$. Accordingly, the ``complex determinant'' operator $\Det_{J}$ for $\mathcal{B}\in\mathcal{S}^{2n}_+$ is defined by 
$$\Det_{J}\mathcal{B}=4^{-n}\sqrt{\det\left(\mathcal{B}-J\mathcal{B}J\right)}.$$
Let us verify that $\ln\Det_{J}$ satisfies conditions (i) and (ii). For condition (i), we have 
$$D\left(\ln\Det_{J}\mathcal{B}\right)=\left(\mathcal{B}-J\mathcal{B}J\right)^{-1}.$$ 
Since $\mathcal{B}>0$, we have $\mathcal{B}-J\mathcal{B}J>0$. Then $D\left(\ln\Det_{J}\mathcal{B}\right)$ is positive-definite.
Actually, $D(\ln\det\partial\bar\partial u)$ is a quarter of the real representation of $\displaystyle\left(\partial\bar\partial u\right)^{-1}$. Equation \eqref{KReq} is elliptic if and only if $u$ is pluri-subharmonic. Since $u$ is strictly convex, it is pluri-subharmonic. For condition (ii), by the arithmetic mean-geometric mean inequality we have
\begin{equation}\label{KR01}
\Det_{J}\mathcal{B}\leq 4^{-n}\left(\frac{1}{2n}\tr\left(\mathcal{B}-J\cdot\mathcal{B}\cdot J\right)\right)^{n}=\frac{1}{(4n)^n}\left(\tr\mathcal{B}\right)^n.
\end{equation}

Because $\Det_{J}\mathcal{B}$ is a homogeneous order $n$ function of $\mathcal{B}$, by Euler's homogeneous function theorem we have $\tr\left(D\ln\Det_{J}\mathcal{B}\cdot\mathcal{B}\right)=n$. However, $\left(\mathcal{B}-J\mathcal{B}J\right)^{-1}$ and $\mathcal{B}$ do not commute in general. So $\ln\Det_{J}$ does not satisfy condition (iii), our method is not suitable to a general convex function $u$. But if $u$ satisfies one of the following conditions, the rigidity theorem still holds. 
\begin{definition}\label{df1}
For a pluri-subharmonic function $u$ on $\mathbb{C}^n$, we say \textbf{the eigenvalues of $\partial\bar\partial u$ are comparable}, if there is a constant $\Lambda\geq 1$ such that
\begin{equation}\label{KRc1}
\mu_{\max}(x)\leq\Lambda \mu_{\min}(x)\,\,\,\,\,\mbox{for any $x\in\mathbb{C}^n$},
\end{equation}
where $\mu_{\max}(x)$ and $\mu_{\min}(x)$ are the largest and the smallest eigenvalues of $\partial\bar\partial u(x)$ respectively.
\end{definition}
\begin{definition}\label{df2}
A function $u$ on $\mathbb{C}^n$ is called \textbf{toric} if
\begin{equation*}
u\left(z^1,...,z^n\right)=u\left(e^{\sqrt{-1}t^1}z^1,...,e^{\sqrt{-1}t^n}z^n\right)\,\,\,\,\,\mbox{for any $(t^1,...,t^n)\in\mathbb{R}^n$}.
\end{equation*}
\end{definition}
\begin{theorem}\label{KR1}
Let $u$ be an entire smooth strictly convex solution on $\mathbb{C}^n$ to \eqref{KReq}. Assume the eigenvalues of $\partial\bar\partial u$ are comparable. Then $u$ is quadratic.
\end{theorem}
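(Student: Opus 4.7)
The plan is to reduce Theorem \ref{KR1} to the argument behind Theorem \ref{M}, applied with $F=\ln\Det_J$ on $\mathcal{S}^{2n}_+$. Conditions (i) and (ii) have already been verified in the introduction for this operator, and the sole obstruction is condition (iii), which fails in general for $\ln\Det_J$ because $(\mathcal{B}-J\mathcal{B}J)^{-1}$ and $\mathcal{B}$ need not commute. I would first inspect the proof of Theorem \ref{M} and verify that condition (iii) is actually invoked only at Hessians of the solution, i.e.\ at matrices of the form $\mathcal{B}=D^2u(x)$. The introduction suggests this is where the bound should enter, since (iii) appears to feed into the ODE for the spherical mean of $u$ and into the final barrier construction for $\phi$, both of which are evaluated only on the trajectory. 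If this localization is valid, it suffices to produce a uniform bound on $\|DF(D^2u(x))\cdot D^2u(x)\|$ over $x\in\mathbb{C}^n$ using the comparable-eigenvalue hypothesis.

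The next step is to exhibit this bound. A direct computation simplifies $DF$ to a clean form. Set $C:=\mathcal{B}-J\mathcal{B}J$; then $JC=JH-J^2HJ=JH+HJ=CJ$, so $J$ commutes with $C$ and hence $JC^{-1}J=C^{-1}J^2=-C^{-1}$. Plugging this into the general differentiation formula collapses it to
\[
DF(\mathcal{B})=\tfrac{1}{2}\bigl(C^{-1}-JC^{-1}J\bigr)=C^{-1}.
\]
Writing $H=D^2u$ and $A=\partial\bar\partial u$, the matrix $C=H-JHJ$ is the real representation of $4A$, so its eigenvalues are $4\mu_1,4\mu_1,\dots,4\mu_n,4\mu_n$ where $\mu_1\leq\dots\leq\mu_n$ are the eigenvalues of $A$. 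The hypothesis $\mu_n\leq\Lambda\mu_1$ gives $\|C^{-1}\|_{op}=1/(4\mu_1)$, while the convexity of $u$ together with $\tr H=4\tr A\leq 4n\mu_n\leq 4n\Lambda\mu_1$ gives $\|H\|_{op}\leq 4n\Lambda\mu_1$. Combining these,
\[
\bigl\|DF(D^2u)\cdot D^2u\bigr\|_{op}=\bigl\|C^{-1}H\bigr\|_{op}\leq \|C^{-1}\|_{op}\|H\|_{op}\leq n\Lambda,
\]
uniformly in $x$. This is exactly the along-solution form of (iii), with $k_2=n\Lambda$.

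With condition (iii) secured along the solution, the argument of Theorem \ref{M} carries over verbatim: the spherical-mean ODE inequality yields the quadratic growth bound on $u$, the linear bound on $|Du|$ and the sublinear bound on $u^-$; these feed into the non-concrete barrier that forces $\sup_{\mathbb{C}^n}\phi$ (where $\phi=\ln\det\partial\bar\partial u$) to be attained at some point; the strong maximum principle applied to the linear elliptic equation $(DF)^{kl}\phi_{kl}-\tfrac{1}{2}x_i\phi_i=0$ (no zeroth-order term) then forces $\phi$ constant; and the self-similar form of the right-hand side of \eqref{KReq}, together with its strict concavity in the radial rescaling parameter, forces $u$ to be quadratic. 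The main obstacle is bookkeeping: one has to verify that condition (iii) truly enters the proof of Theorem \ref{M} only through values $DF(D^2u(x))\cdot D^2u(x)$, so that the along-solution bound above is genuinely sufficient. The computation of $DF(\mathcal{B})=C^{-1}$ and the operator-norm estimate are themselves elementary once the identity $JC^{-1}J=-C^{-1}$ is in hand.
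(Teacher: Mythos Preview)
Your proposal is correct and follows essentially the same route as the paper: the paper observes that the entire proof of Theorem~\ref{HQ} carries over verbatim except for inequality~\eqref{e01}, and then uses the comparable-eigenvalue hypothesis to show $\bigl|\sum_j a^{ij}u_{js}\bigr|\leq n\Lambda$, which yields the replacement $E(x)\leq n\Lambda\,|x|\,|D\hat u(x)|$. Your operator-norm estimate $\|C^{-1}H\|_{op}\leq\|C^{-1}\|_{op}\|H\|_{op}\leq n\Lambda$ is a cleaner packaging of the same bound; one small correction to your bookkeeping is that condition~(iii) does \emph{not} enter the spherical-mean ODE (Lemmas~\ref{lem1}--\ref{lem4} use only condition~(ii)), so the sole place requiring your along-solution bound is the control of $I_2$ via \eqref{e01} in the barrier argument.
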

\begin{theorem}\label{KR2}
Let $u$ be an entire smooth convex solution on $\mathbb{C}^n$ to \eqref{KReq}.
Assume $u$ is toric. Then $u$ is quadratic.
\end{theorem}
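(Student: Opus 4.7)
The plan is to reduce \eqref{KReq} from a complex equation on $\mathbb{C}^n$ to a real Monge--Amp\`ere type equation on $\mathbb{R}^n$ via the toric symmetry, and then to follow the proof strategy of Theorem \ref{M} on the reduced equation. Using toric invariance, write $u(z)=g(|z^1|^2,\ldots,|z^n|^2)$ and set $\xi^k=\log|z^k|^2$, $h(\xi)=g(e^{\xi^1},\ldots,e^{\xi^n})$. From $u_{i\bar j}=g_{ij}z^j\bar z^i+g_i\delta_{ij}$ and $h_{ij}=g_{ij}e^{\xi^i+\xi^j}+g_ie^{\xi^i}\delta_{ij}$ (indices not summed), one obtains the identity $(h_{ij})=R\tilde AR$ with $R=\diag(|z^1|,\ldots,|z^n|)$ and $\tilde A$ the real symmetric matrix conjugate to $(u_{i\bar j})$ by the diagonal unitary $\diag(z^k/|z^k|)$. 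Hence $\det\partial\bar\partial u=e^{-\mathbf{1}\cdot\xi}\det D^2h$, and positivity of $\partial\bar\partial u$ (forced by \eqref{KReq}) makes $D^2h$ positive-definite, i.e., $h$ strictly convex on $\mathbb{R}^n$. Since $\tfrac12 z\cdot Du=\sum_k|z^k|^2g_k=\mathbf{1}\cdot Dh$, Equation \eqref{KReq} becomes
\[
\ln\det D^2h(\xi)=\mathbf{1}\cdot Dh(\xi)-h(\xi)+\mathbf{1}\cdot\xi.
\]

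Next, I would define the phase in log coordinates by $\psi(\xi):=\mathbf{1}\cdot Dh-h$ (which coincides with $\ln\det\partial\bar\partial u$ under the identification). Differentiating the equation once gives $h^{ij}h_{ijk}=\psi_k+1$, and contracting $h^{ij}\partial_i$ with a second differentiation yields
\[
h^{ij}\psi_{ij}=\mathbf{1}\cdot D\psi,
\]
the linear elliptic equation without zeroth-order term that is the direct analog of the phase equation appearing in the proof of Theorem \ref{M}, with the drift $\mathbf{1}$ in place of $\tfrac12\xi$. Observe that $F=\ln\det$ on $\mathcal{S}^n_+$ itself satisfies all three hypotheses (i)--(iii) of Theorem \ref{M}; the difficulty handled by the toric hypothesis lies entirely in the modified self-similar drift of the reduced equation, not in structural conditions on $F$.

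From here I would run the four-step strategy outlined in the introduction for the proof of Theorem \ref{M}: (a) derive a second-order ODE inequality for a suitable mean of $h$ adapted to the translation drift $\mathbf{1}$; (b) deduce growth estimates on $h$, $|Dh|$ and $h^-$ by ODE techniques combined with the convexity of $h$; (c) construct a non-concrete barrier function based on $h$ and $\psi$ to force $\sup_{\mathbb{R}^n}\psi$ to be attained; (d) apply the strong maximum principle to conclude $\psi\equiv\mathrm{const}$. Translating back to $\mathbb{C}^n$, this gives $\tfrac12 z\cdot Du-u\equiv\mathrm{const}$; differentiating yields $D^2u\cdot z=Du$, which upon integration along rays from the origin gives $u(z)=u(0)+Q(z)$ with $Q$ positively homogeneous of degree two, hence (by smoothness) a quadratic form, so $u$ is quadratic.

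The main obstacle is step (a)--(b): the drift $\mathbf{1}\cdot D$ generates translations in the direction $\mathbf{1}$ rather than dilations about the origin, so the standard spherical-mean computation underlying Theorem \ref{M} does not apply directly. One likely needs to work with a translation-adapted average (for example, decomposing $\xi=(\mathbf{1}\cdot\xi/n)\mathbf{1}+\eta$ with $\eta\cdot\mathbf{1}=0$ and averaging along the $\mathbf{1}$ direction with $\eta$ fixed), and to track carefully how the logarithmic change of coordinates converts polynomial growth of $u$ in $|z|$ into exponential growth of $h$ in $|\xi|$, so that the barrier function in step (c) dominates exponential rather than polynomial behavior at infinity in $\xi$-coordinates.
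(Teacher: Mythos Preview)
Your reduction to log-coordinates is correct up to the phase equation $h^{ij}\psi_{ij}=\mathbf{1}\cdot D\psi$, but after that the proposal is a sketch with a self-acknowledged gap rather than a proof. The gap is genuine: the entire machinery of Lemmas~\ref{lem1}--\ref{lem4} and the barrier construction in the proof of Theorem~\ref{HQ} hinges on the \emph{radial} drift $\tfrac12 x\cdot D$. That drift is what makes $\mathcal{L}\hat u=N-\tfrac12 x\cdot D\hat u$ tend to $-\infty$, and what makes the spherical-mean computation close up into an autonomous second-order ODE inequality. With a constant drift $\mathbf{1}\cdot D$ none of this survives: the operator no longer has a ``center'', the natural scale in $\xi$ is exponential (since quadratic growth of $u$ in $z$ becomes exponential growth of $h$ in $\xi$), and the behavior as $\xi\to-\infty$ along $\mathbf{1}$ corresponds to $z\to 0$, so boundary information from smoothness of $u$ at the origin would have to be fed in by hand. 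You have not indicated how to carry out any of steps (a)--(c) in this setting, and the observation that $F=\ln\det$ satisfies (i)--(iii) is irrelevant because your reduced equation is not of the form \eqref{Meq}.

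The paper's argument avoids all of this by \emph{not} changing coordinates. It works on $\mathbb{R}^{2n}$ with the operator $\ln\Det_J$, which already satisfies conditions (i) and (ii); the proof of Theorem~\ref{HQ} then goes through verbatim \emph{except} for the single inequality \eqref{e01}, namely the bound on $E(x)=\hat u_i\,a^{ij}u_{js}x^s$ that was obtained from condition (iii). The toric hypothesis is used only to replace that one estimate: since $E$ is $\mathbb{T}^n$-invariant it suffices to evaluate at $x=(r,0)$, where $D^2u$ block-diagonalizes into $D^2f(r)$ and $\Omega(r)=\diag(f_i/r^i)$; a two-line matrix computation then gives
\[
E(r,0)=(Df)^{\mathrm T}(D^2f+\Omega)^{-1}D^2f\cdot r=(Df)^{\mathrm T}r-(Df)^{\mathrm T}(D^2f+\Omega)^{-1}Df\le x\cdot Du(x),
\]
which is all that is needed. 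In short, the paper localizes the role of the toric symmetry to a single algebraic inequality and keeps the radial drift intact, whereas your change of variables trades a one-line fix for a structurally different drift whose analysis you have not supplied.
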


Equation \eqref{HQeq} has a relationship with Legendre transformation (cf. \cite{HW}). Suppose that $u$ is a strictly convex solution to \eqref{HQeq}, then the Legendre transform of $u$ denoted by $u^*$ satisfies 
\begin{equation}\label{LTeq}
\ln q_{n-n_2,n-n_1}\left(D^2 u^*\right)=\frac{1}{2}x\cdot Du^*-u^*.
\end{equation}
In particular, when $n_1+n_2=n$, \eqref{HQeq} is invariant under Legendre transformation. Taking advantage of this relation, we have the following theorem.
\begin{theorem}\label{LT}
Let $\Omega\subset\mathbb{R}^n$ be a bounded convex domain. Assume $u$ is a smooth strictly convex solution to \eqref{HQeq} in $\Omega$. Then $u$ is improper.
\end{theorem}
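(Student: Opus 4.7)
The plan is to argue by contradiction using Legendre duality, reducing to an entire solution on $\mathbb{R}^n$ and then invoking Theorem \ref{HQ}.

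Suppose, for contradiction, that $u$ is proper on the bounded convex domain $\Omega$, meaning $u(x)\to+\infty$ as $x\to\partial\Omega$. Since $u$ is smooth and strictly convex in $\Omega$, the gradient map $Du:\Omega\to\mathbb{R}^n$ is a local diffeomorphism. Properness forces $Du$ to be surjective: for any $p\in\mathbb{R}^n$, the smooth strictly convex function $x\mapsto u(x)-p\cdot x$ tends to $+\infty$ at $\partial\Omega$ and therefore attains its minimum at an interior point $x(p)\in\Omega$, where $Du(x(p))=p$. Strict convexity makes $x(p)$ unique, so $Du$ is in fact a $C^{\infty}$ diffeomorphism from $\Omega$ onto $\mathbb{R}^n$. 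Consequently the Legendre transform
\begin{equation*}
u^{*}(p)=x(p)\cdot p-u(x(p))
\end{equation*}
is a smooth strictly convex function defined on all of $\mathbb{R}^n$, with $Du^{*}(p)=x(p)$ and $D^2 u^{*}(p)=(D^2 u(x(p)))^{-1}$.

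Next I would verify, via the standard identity $\sigma_l(\mathcal{B}^{-1})=\sigma_{n-l}(\mathcal{B})/\det\mathcal{B}$ for positive-definite $\mathcal{B}$, that $q_{n_1,n_2}(\mathcal{B}^{-1})=1/q_{n-n_2,n-n_1}(\mathcal{B})$, and then substitute $x=Du^{*}(p)$, $u(x)=p\cdot Du^{*}(p)-u^{*}(p)$ into \eqref{HQeq}. A short computation gives precisely the dual equation
\begin{equation*}
\ln q_{n-n_2,n-n_1}(D^2 u^{*}(p))=\tfrac{1}{2}p\cdot Du^{*}(p)-u^{*}(p),
\end{equation*}
which is the statement \eqref{LTeq}. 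Since $0\le n_2<n_1\le n$ yields $0\le n-n_1<n-n_2\le n$, this is a genuine Hessian quotient equation of the form \eqref{HQeq}, and $u^{*}$ is an entire smooth strictly convex solution on $\mathbb{R}^n$.

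Theorem \ref{HQ} then forces $u^{*}$ to be quadratic. Taking the Legendre transform once more (using that the Legendre transform of a strictly convex quadratic polynomial is again a strictly convex quadratic polynomial), we conclude that $u$ itself is a quadratic polynomial on $\Omega$. But a polynomial is uniformly bounded on the bounded set $\Omega$, so it cannot blow up at $\partial\Omega$, contradicting the assumed properness. Hence $u$ must be improper.

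The only mildly delicate step is the dualization of the equation; everything else is a direct consequence of Theorem \ref{HQ}. The main conceptual point is therefore not a hard estimate but the observation that the Hessian quotient structure is preserved (with swapped indices) under Legendre duality, so that the entire-solution rigidity gives bounded-domain non-properness for free.
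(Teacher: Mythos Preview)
Your proposal is correct and follows essentially the same approach as the paper: argue by contradiction, pass to the Legendre transform $u^{*}$, observe that it is an entire strictly convex solution of the dual Hessian quotient equation \eqref{LTeq}, apply Theorem \ref{HQ} to conclude $u^{*}$ is quadratic, and then derive a contradiction since $u$ would itself be quadratic and hence bounded on the bounded domain $\Omega$. The paper's proof is terser (it simply asserts that properness of $u$ makes $u^{*}$ entire and that \eqref{LTeq} holds), whereas you spell out why $Du:\Omega\to\mathbb{R}^n$ is a diffeomorphism and verify the eigenvalue identity $q_{n_1,n_2}(\mathcal{B}^{-1})=1/q_{n-n_2,n-n_1}(\mathcal{B})$ explicitly; these elaborations are sound and fill in exactly the details the paper leaves to the reader.
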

\begin{proof}
We proceed by contradiction. Assume $u$ is proper, then $u^*$ is an entire smooth strictly convex solution on $\mathbb{R}^n$ to \eqref{LTeq}. According to Theorem \ref{HQ}, $u^*$ is quadratic. By the property of Legendre transform, $u$ is also quadratic. Since a quadratic function cannot be proper in a bounded domain, the assumption is not true. Therefore $u$ is improper.
\end{proof}

Although Theorem \ref{HQ} is a special case of Theorem \ref{M}, its proof is more original and explicit. And readers can get the proof of Theorem \ref{M} from the proof of Theorem \ref{HQ} easily with only change of symbols and constants. So we only prove Theorem \ref{HQ} in the following. For the proof of Theorem \ref{KR1} and Theorem \ref{KR2}, we skip the common part with Theorem \ref{HQ}, and only talk about the difference.

\section{Proof of Theorem \ref{HQ}}
To get appropriate estimates for the solution $u$, we establish four lemmas. In these lemmas, we prove that if $u$ satisfies the conditions of Theorem \ref{M}, then $u$ has at most a quadratic growth, $|Du|$ has at most a linear growth and $u^-$ has a sublinear growth.

In the first two lemmas, we derive a second order ordinary differential inequality for the spherical mean of $\tilde u$, where $\tilde u$ is related to $u$ by a simple linear transform (shown below in \eqref{lt}). Then we prove that the spherical mean of $\tilde u$ has at most a quadratic growth and this property is passed on to $u$.
\begin{definition}
For a $C^2$ function $h(x)$ on $\mathbb{R}^n$, define 

(i) the spherical mean of $h$ by
\begin{equation}
S_h(r)=\frac{1}{\omega_n r^{n-1}}\int_{\partial B_r(0)}h(x)\diff S_x, \nonumber
\end{equation}
and 

(ii) the ball mean of $\Delta h$ by
\begin{equation}
\Phi_h(r)=\frac{n}{\omega_n r^n}\int_{B_r(0)}\Delta h(x)\diff x,\nonumber
\end{equation}
where $\omega_n$ is the surface area of the unit sphere in $\mathbb{R}^n$.
\end{definition}

\begin{lemma}\label{lem1}
Let $h(x)$ be a $C^2$ function on $\mathbb{R}^n$ satisfying
\begin{equation}\label{lemma1}
\Delta h(x)\geq\exp\left[x\cdot Dh\left(x\right)-2h\left(x\right)\right].
\end{equation}
Then $S_h(r)$ has at most a quadratic growth and $\Phi_h(r)$ is bounded.
\end{lemma}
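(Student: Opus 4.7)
My plan is to extract a one-dimensional ordinary differential inequality for $f(r) := S_h(r)$ and then analyze it with ODE techniques. First, the standard identity $(r^{n-1} f'(r))' = r^{n-1} S_{\Delta h}(r)$ follows by differentiating $\omega_n r^{n-1} f'(r) = \int_{B_r} \Delta h \, dx$ (the divergence theorem). On $\partial B_r$ one has $x \cdot Dh = r\,\partial_\nu h$, so the spherical mean of $x \cdot Dh$ equals $r f'(r)$, and that of $-2 h$ equals $-2 f(r)$. Jensen's inequality applied to $\exp$ on $\partial B_r$, combined with the pointwise hypothesis, then yields
\[
(r^{n-1} f'(r))' \;\geq\; r^{n-1} \exp\!\bigl(r f'(r) - 2 f(r)\bigr).
\]

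Next I would reduce both conclusions to a single scalar bound. Set $u(r) := r f'(r) - 2 f(r)$; a direct calculation gives $(f(r)/r^2)' = u(r)/r^3$. Therefore, if one establishes an upper bound $\sup_{r \geq 0} u(r) \leq M$, integrating $(f/r^2)' \leq M/r^3$ outward from any fixed $r_0 > 0$ gives $f(r) \leq C r^2 + C'$, the desired at-most-quadratic growth of $S_h$. Combining this with $r f'(r) = u(r) + 2 f(r) \leq M + 2 C r^2 + 2 C'$ yields $\Phi_h(r) = n f'(r)/r \leq 2 n C + n(M + 2 C')/r^2$, which is bounded for $r \geq 1$; boundedness near $r = 0$ is immediate from the continuity of $\Delta h$ (indeed $\Phi_h(r)$ is the ball average of $\Delta h$ over $B_r$, so $\Phi_h(r) \to \Delta h(0)$). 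So both conclusions follow from the upper bound on $u$.

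To prove that upper bound I would argue by contradiction. Using $u'(r) = r f''(r) - f'(r)$ and the ODI above gives the first-order inequality
\[
u'(r) \;\geq\; r \exp(u(r)) \;-\; n f'(r).
\]
If $u$ were unbounded above, pick $r_0$ with $M := u(r_0)$ large; then $r_0 e^{u(r_0)}$ is enormous, and provided $f'$ is only polynomially large on a short interval past $r_0$, the inequality reduces to $u'(r) \geq \tfrac{1}{2} r \exp(u(r))$, whose solutions blow up at some finite $r_\ast$, contradicting $h \in C^2(\mathbb{R}^n)$. The main obstacle is to control $f'$ on this short interval in a way that is quantitative in $M$. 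My plan is a bootstrap: on the maximal interval $[0, r_0]$ where $u \leq M$, the reduction of the previous paragraph gives $f(r) \leq C(M) r^2$; the monotonicity of $r^{n-1} f'(r)$ inherent in the ODI, combined with $f(2r) - f(r) = \int_r^{2r} f'(s)\,ds \geq c_n r f'(r)$, then upgrades this to a linear bound $f'(r) \leq C'(M) r$. The delicate point is to ensure the blow-up mechanism outpaces the $n f'$ correction — i.e., that $M$ can be chosen large enough relative to $C'(M)$ — which is how the contradiction closes.
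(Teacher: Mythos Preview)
Your reduction to the ordinary differential inequality via spherical averaging and Jensen is correct and identical to the paper's first step, and your observation that both conclusions follow from an upper bound on $u(r)=rf'(r)-2f(r)$ is also right.

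The gap is in the blow-up argument. Your bootstrap produces $f'\le C'(M)\,r$ (with $C'(M)$ linear in $M$) only on the interval where $u\le M$; in fact your monotonicity trick $f(2r)-f(r)\ge c_n r f'(r)$ needs $2r\le r_0$, so literally only on $[0,r_0/2]$, though the direct identity $rf'=u+2f$ repairs that. But the blow-up $u'\ge\tfrac12 r e^{u}$ must be sustained on an interval \emph{beyond} $r_0$, where $u>M$ and your $f'$-bound is no longer available. Choosing $M$ large relative to $C'(M)$ only gives $u'(r_0)>0$; it does not show that the comparison $nf'\le\tfrac12 r e^{u}$ persists while $u$, $f$, and $f'=(u+2f)/r$ all run off together. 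This circularity is the substance of the lemma and you have not closed it. (It \emph{is} closable: for instance $(f'/r)'=u'/r^2$ integrates to $f'(r)/r\le C'(M)+(u(r)-M)/r_0^{2}$, which is linear in $u$ while $\tfrac{1}{2n}e^{u}$ is exponential; but some such argument must actually be supplied.)

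The paper avoids the circularity by a two-stage choice of auxiliary function. First it proves $p(r):=rf'(r)-3f(r)<0$ for $r\ge 4n$. The point of the coefficient $3$ is that wherever $p\ge 0$ one has $rf'-2f\ge\tfrac13 rf'$, a quantity directly comparable to $f'$ with no reference to $f$; the inequality then collapses to $f''>\tfrac12 e^{f'}$ and Osgood gives finite-time blow-up with no bootstrap needed. This first stage yields a \emph{global} bound $f'(r)\le c_1 r^2$ for large $r$. Only then does the paper attack $q(r):=rf'(r)-2f(r)-r$: at a putative boundary point the exponent equals $r$, and the global $f'$-bound from stage one is exactly what controls the $nf'$ correction. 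The two-step decomposition is precisely the device that breaks your circularity.
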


\begin{proof}
First of all, we derive a differential inequality for $S_h$. By definition, 
\begin{equation}
S_h(r)=\frac{1}{\omega_n}\int_{\partial B_1(0)}h(r\omega)\diff\omega. \nonumber
\end{equation}
Taking one derivative, we have
\begin{equation}\label{sm1}
S'_h(r)=\frac{1}{\omega_n}\int_{\partial B_1(0)}h_r(r\omega)\diff\omega. 
\end{equation}
Multiplying $r^{n-1}$ on both sides of \eqref{sm1} and using Stokes's formula,
\begin{equation}
\begin{split}\label{sm2}
r^{n-1}S'_h(r)&=\frac{1}{\omega_n}\int_{\partial B_1(0)}h_r(r\omega)\,r^{n-1}\diff\omega \\
&=\frac{1}{\omega_n}\int_{\partial B_r(0)}\frac{\partial h}{\partial\nu}\left(x\right)\diff S_x \\
&=\frac{1}{\omega_n}\int_{B_r(0)}\Delta h\left(x\right)\diff x.
\end{split}
\end{equation}
A differentiation of \eqref{sm2} yields
\begin{equation}
\left[r^{n-1}S'_h(r)\right]'=\frac{1}{\omega_n}\int_{\partial B_r(0)}\Delta h\left(x\right)\diff S_x.\nonumber
\end{equation}
Dividing both sides of above equation by $r^{n-1}$ and using \eqref{lemma1}, we get
\begin{align*}
S''_h(r)+\frac{n-1}{r}S_h'(r)&=\frac{1}{r^{n-1}}\left[r^{n-1}S_h'(r)\right]' \\
&=\frac{1}{\omega_n r^{n-1}}\int_{\partial B_r(0)}\Delta h\left(x\right)\diff S_x \\
&\geq\frac{1}{\omega_n r^{n-1}}\int_{\partial B_r(0)}\exp\left[x\cdot Dh(x)-2h(x)\right]\diff S_x.
\end{align*}
By Jensen's inequality we obtain
\begin{align*}
&\frac{1}{\omega_n r^{n-1}}\int_{\partial B_r(0)}\exp\left[x\cdot Dh(x)-2h(x)\right]\diff S_x\\
\geq&\exp\left\{\frac{1}{\omega_n r^{n-1}}\int_{\partial B_r(0)}\left[x\cdot Dh(x)-2h(x)\right]\diff S_x\right\}\\
=&\exp\left[rS_h'(r)-2S_h(r)\right].\nonumber
\end{align*}
Thus $S_h$ satisfies the following second order ordinary differential inequality
\begin{equation}\label{sm3}
S''_h(r)+\frac{n-1}{r}S_h'(r)\geq\exp\left[rS_h'(r)-2S_h(r)\right].
\end{equation}

Then we analyze above ordinary differential inequality. From \eqref{sm2} we see $S_h'(r)>0$ for $r>0$. 
Define an auxiliary function $p(r)$ by
\begin{equation*}
p(r)=rS_h'(r)-3S_h(r).
\end{equation*}
We claim $p(r)<0$ when $r\geq 4n$. Otherwise, there exists $r_1\geq 4n$ such that $p(r_1)\geq 0$. Define $r_2=\sup\{t\,|\,p(r)\geq 0,\,r\in[r_1,t]\}$. If $r_2<+\infty$, then
\begin{align*}
p'(r_2)&=r_2S_h''(r_2)-2S_h'(r_2) \\
&\geq r_2\left\{\exp\left[r_2S_h'(r_2)-2S_h(r_2)\right]-\frac{n-1}{r_2}S_h'(r_2)\right\}-2S_h'(r_2)\\
&=r_2\exp\left[\frac{1}{3}r_2S_h'(r_2)\right]-(n+1)S_h'(r_2)\\
&\geq\left(\frac{r_2^2}{3}-n-1\right)S_h'(r_2)>0.
\end{align*}
This contradicts the definition of $r_2$. Therefore $p(r)\geq 0$ holds on $[r_1, +\infty)$. Thus
\begin{align*}
S_h''(r)&\geq\exp\left[rS_h'(r)-2S_h(r)\right]-\frac{n-1}{r}S_h'(r)\\
&\geq\exp\left[\frac{1}{3}rS_h'(r)\right]-\frac{n-1}{r}S_h'(r)\\
&>\frac{1}{2}\exp S_h'(r).
\end{align*}
By Osgood's criterion, $S_h'(r)$ blows up in finite time, which contradicts the assumption that $h$ is entire. So the claim is true. For $r\geq 4n$ we have
\begin{equation}\label{sm4}
\frac{S_h'(r)}{S_h(r)}<\frac{3}{r}.
\end{equation}
Integrating \eqref{sm4}, we get
\begin{equation*}
S_h(r)<\frac{S_h\left(4n\right)}{64n^3}r^3\quad\,\mbox{for $r\geq 4n$.}
\end{equation*} 
Substituting above inequality into \eqref{sm4}, we obtain
\begin{equation}\label{sm5}
S_h'(r)<c_1r^2\quad\,\mbox{for $r\geq 4n$,}
\end{equation}
where $c_1=S_h\left(4n\right)$.

Now we have proved $S_h$ has at most a cubic growth. To get a finer estimate, we introduce another auxiliary function $q(r)$ given by
\begin{equation}
q(r)=rS_h'(r)-2S_h(r)-r. \nonumber
\end{equation}
We claim $q(r)<0$ when $r\geq n(c_1+4)$. The proof is similar. If the claim is not true, then there exists $r_3\geq n(c_1+4)$ such that $q(r_3)\geq 0$. Define $r_4=\sup\{t\,|\,q(r)\geq 0,\,r\in[r_3,t]\}$. If $r_4<+\infty$, then
\begin{align*}
q'(r_4)&=r_4S_h''(r_4)-S_h'(r_4)-1\\
&\geq r_4\left\{\exp\left[r_4S_h'(r_4)-2S_h(r_4)\right]-\frac{n-1}{r_4}S_h'(r_4)\right\}-S_h'(r_4)-1\\
&= r_4\exp r_4-nS_h'(r_4)-1\\
&\geq r_4\exp r_4-nc_1r_4^2-1>0.
\end{align*}
This contradicts the definition of $r_4$. Hence $rS_h'(r)-2S_h(r)\geq r$ holds on $[r_3, +\infty)$. It follows that
\begin{align*}
S_h''(r)&\geq\exp\left[rS_h'(r)-2S_h(r)\right]-\frac{n-1}{r}S_h'(r)\\
&>\exp r-nc_1r.
\end{align*}
Thus $S_h'(r)$ has an exponential growth as $r\rightarrow+\infty$, which contradicts \eqref{sm5}. 

Consequently, we have $rS_h'(r)-2S_h(r)<r$ for $r\geq n(c_1+4)$. Or equivalently,
\begin{equation}
\left[\frac{S_h(r)}{r^2}\right]'<\frac{1}{r^2}.\nonumber
\end{equation}
Integrating above inequality, we see when $r\geq 1$,
\begin{equation}\label{sm6}
\frac{S_h(r)}{r^2}<S_h\left(c_1n+4n\right)+1.
\end{equation}
Clearly $S_h$ has at most a quadratic growth. According to \eqref{sm2},
\begin{equation}\label{sm7}
\Phi_h(r)=\frac{nS_h'(r)}{r}.
\end{equation}
Combining \eqref{sm4}, \eqref{sm6} and \eqref{sm7}, we conclude that $\Phi_h(r)$ is bounded.
\end{proof}
\begin{lemma}\label{lem2}
Let $u$ be as stated in Theorem \ref{M}. Then $S_u(r)$ has at most a quadratic growth, and $\Phi_u(r)$ is bounded.
\end{lemma}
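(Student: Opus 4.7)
The plan is to reduce Lemma \ref{lem2} to Lemma \ref{lem1} by an affine rescaling of $u$. Define
\[
\tilde{u}(x)\;=\;\lambda\, u(\mu x)
\]
for positive constants $\lambda,\mu$ to be chosen. Using \eqref{Meq} at the point $\mu x$, a direct computation gives
\[
x\cdot D\tilde{u}(x)-2\tilde{u}(x)\;=\;2\lambda\!\left[\frac{1}{2}(\mu x)\cdot Du(\mu x)-u(\mu x)\right]\;=\;2\lambda\, F\bigl(D^2 u(\mu x)\bigr),
\]
while $\Delta\tilde{u}(x)=\lambda\mu^2\,\Delta u(\mu x)$. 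Hence verifying the hypothesis $\Delta\tilde u\geq\exp[x\cdot D\tilde u-2\tilde u]$ of Lemma \ref{lem1} reduces to the pointwise inequality
\[
\lambda\mu^2\,\Delta u\;\geq\;\exp\bigl[2\lambda\, F(D^2 u)\bigr].
\]

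The key step will be choosing $\lambda$ and $\mu$ so that this holds. I plan to use the AM--GM estimate \eqref{HQ1}, which yields $\exp F(D^2u)=q_{n_1,n_2}(D^2u)\leq C_0(\Delta u)^{n_1-n_2}$ in the strictly convex regime. The right-hand side then becomes $C_0^{2\lambda}(\Delta u)^{2\lambda(n_1-n_2)}$, so taking $\lambda=1/[2(n_1-n_2)]$ exactly matches the powers of $\Delta u$ on both sides, and the requirement collapses to the purely numerical inequality $\lambda\mu^2\geq C_0^{1/(n_1-n_2)}$, arranged by choosing $\mu$ sufficiently large. Lemma \ref{lem1} then applies to $\tilde u$ and delivers the quadratic growth of $S_{\tilde u}$ together with the boundedness of $\Phi_{\tilde u}$.

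The final step is to transfer these conclusions back to $u$. The identities $S_{\tilde u}(r)=\lambda\, S_u(\mu r)$ and $\Phi_{\tilde u}(r)=\lambda\mu^2\,\Phi_u(\mu r)$, both obtained by a change of variable in the defining integrals, immediately convert the conclusions for $\tilde u$ into the desired conclusions for $u$. I do not anticipate a serious obstacle: the analytic content lives in Lemma \ref{lem1}, and Lemma \ref{lem2} is essentially the exercise of linearising the exponential $\exp F$ in $\Delta u$ via the homogeneity-matching exponent $\lambda=1/[2(n_1-n_2)]$. For the more general setting of Theorem \ref{M} one invokes condition \rm(ii) in place of \eqref{HQ1}; the additive $+1$ in \rm(ii) can be absorbed either by a harmless constant adjustment or by adding a small quadratic term $b|x|^2$ to $\tilde u$, which affects neither the growth rate of $S_{\tilde u}$ nor the boundedness of $\Phi_{\tilde u}$.
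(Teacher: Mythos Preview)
Your proposal is correct and follows essentially the same route as the paper: both reduce to Lemma~\ref{lem1} by an affine change $u\mapsto \tilde u$ that scales $u$ by the factor $1/(2k_1)$ so as to match the exponent of $\Delta u$. The only cosmetic difference lies in how the multiplicative constant $C$ is absorbed---you dilate space by $\mu$, whereas the paper shifts $u$ by an additive constant and (exactly as you anticipate for the general case) adds the quadratic $\tfrac{1}{2n}|x|^2$ to swallow the $+1$ appearing in condition~(ii).
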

\begin{proof}
According to condition (ii) and \eqref{Meq}, we have
\begin{equation*}
C\left[\left(\Delta u\right)^{k_1}+1\right]\geq\exp\left(\frac{1}{2}x\cdot Du-u\right).
\end{equation*}
Since $u$ is strictly convex, $\Delta u>0$. If $k_1\geq 1$, we have
\begin{equation}\label{s1}
\left(\Delta u+1\right)^{k_1}\geq\left(\Delta u\right)^{k_1}+1\geq\exp\left(\frac{1}{2}x\cdot Du-u-\ln C\right).
\end{equation}
Set
\begin{equation}\label{lt}
\tilde u(x)=\frac{1}{2k_1}\left[u(x)+\frac{1}{2n}|x|^2+k_1\ln 2k_1+\ln C\right].
\end{equation}
Then it follows that
\begin{align*}
\Delta\tilde u(x)&=\frac{1}{2k_1}\left[\Delta u(x)+1\right]\\
&\geq\frac{1}{2k_1}\exp\frac{1}{2k_1}\left(x\cdot Du-2u-2\ln C\right)\\
&=\exp\left(x\cdot D\tilde u-2\tilde u\right).
\end{align*}
According to Lemma \ref{lem1}, $S_{\tilde u}(r)$ has at most a quadratic growth, and $\Phi_{\tilde u}(r)$ is bounded. Since we have the following relations
\begin{equation*}
S_u(r)=2k_1S_{\tilde u}(r)-\frac{1}{2n}r^2-k_1\ln 2k_1-\ln C
\end{equation*}
and
\begin{equation*}
\Phi_u(r)=2k_1\Phi_{\tilde u}(r)-1,
\end{equation*}
we conclude that $S_{u}(r)$ has at most a quadratic growth and $\Phi_{u}(r)$ is bounded.
For the case $k_1<1$, we have
\begin{equation}\label{s2}
2C\left(\Delta u+1\right)>C\left[\left(\Delta u\right)^{k_1}+1\right]\geq\exp\left(\frac{1}{2}x\cdot Du-u\right).
\end{equation}
In a very similar manner, we also draw the conclusion.
\end{proof}

For a convex function, once we know the growth of its spherical mean, we know the growth of itself as well as its gradient.
\begin{lemma}\label{lem3}
Let $h(x)$ be a $C^1$ convex function on $\mathbb{R}^n$. Assume that $S_h(r)$ has at most a quadratic growth. Then $h(x)$ has at most a quadratic growth, and $\left|Dh(x)\right|$ has at most a linear growth.
\end{lemma}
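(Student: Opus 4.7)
The plan is to exploit the convexity of $h$ via Jensen's inequality. The first thing to note is that the spherical mean $S_h$ alone cannot bound $h$ pointwise: the linear function $h(x)=x_1$ has $S_h\equiv 0$ yet grows linearly, so the affine part of $h$ at the origin must be peeled off before any averaging argument can succeed. Accordingly, I would set $l(x)=h(0)+Dh(0)\cdot x$ and work with $\tilde h=h-l$, which is convex, satisfies $\tilde h\ge 0$ globally (since $l$ is the tangent affine function to $h$ at $0$), and has spherical mean $S_{\tilde h}(r)=S_h(r)-h(0)$ by the symmetry $\int_{\partial B_r} y\,\diff S_y=0$. In particular $S_{\tilde h}(r)$ is still $O(1+r^2)$.

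Next, for any $x_0$ with $|x_0|=r$, I would apply Jensen's inequality to the convex function $\tilde h$ against the uniform probability measure on the ball $B_r(x_0)$ (whose barycenter is $x_0$):
\[
\tilde h(x_0)\le\frac{1}{|B_r|}\int_{B_r(x_0)}\tilde h(y)\,\diff y.
\]
Because $\tilde h\ge 0$ and $B_r(x_0)\subset B_{2r}(0)$, the right-hand side is at most $\frac{\omega_n}{|B_r|}\int_0^{2r}s^{n-1}S_{\tilde h}(s)\,\diff s$, which is $O(1+r^2)$ after a routine integration using the quadratic bound on $S_{\tilde h}$. This yields $h(x_0)\le C(1+|x_0|^2)$; the matching lower bound $h(x_0)\ge l(x_0)\ge -C(1+|x_0|)$ comes directly from the tangent-plane inequality.

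For the gradient bound, I would use the convex subgradient inequality $h(x_0+v)\ge h(x_0)+Dh(x_0)\cdot v$. Choosing $v=\rho\,Dh(x_0)/|Dh(x_0)|$ (assuming nonzero gradient) gives
\[
|Dh(x_0)|\le\frac{h(x_0+v)-h(x_0)}{\rho}.
\]
Bounding the numerator by the quadratic upper bound on $h(x_0+v)$ and the linear lower bound on $-h(x_0)$ from the previous step, and then optimizing with $\rho=1+|x_0|$, yields the desired linear growth $|Dh(x_0)|\le C(1+|x_0|)$.

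The main obstacle is conceptual rather than technical: recognizing that $S_h$ controls $h$ only modulo an affine function, so the tangent plane at the origin must be subtracted before any convex averaging argument works. Once this reduction is in place, both conclusions follow from standard convex-analytic manipulations.
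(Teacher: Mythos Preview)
Your proposal is correct and takes essentially the same approach as the paper: make the function nonnegative, bound its value at $x_0$ by its average over $B_{|x_0|}(x_0)\subset B_{2|x_0|}(0)$, and then reduce to the quadratic bound on the spherical mean; the gradient estimate via the secant inequality with step $\sim |x_0|$ is likewise identical. The only cosmetic difference is that the paper achieves nonnegativity by adding $A'|x|^2+B'$ and invokes the sub-mean-value property of subharmonic functions, whereas you subtract the tangent plane at the origin and invoke Jensen's inequality---for convex functions these are the same device.
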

\begin{proof}
By the assumption, there exist positive constants $A$ and $B$ such that 
\begin{equation}\label{qg2}
S_h(r)\leq Ar^2+B\,\,\,\,\,\,\mbox{for all $r\geq 0$.}
\end{equation}
Since $h$ is convex, there exist positive constants $A'$ and $B'$ such that
\begin{equation}\label{qg3}
h(x)+A'|x|^2+B'\geq 0\,\,\,\,\,\,\mbox{for all $x\in\mathbb{R}^n$.}
\end{equation}
As $h(x)+A'|x|^2+B'$ is subharmonic, it satisfies mean value inequality. Then it follows that
\begin{equation}\label{ge4}
\begin{split}
h(x)+A'|x|^2+B'&\leq\frac{n}{\omega_{n} |x|^{n}}\int_{B_{|x|}(x)}\left[h(y)+A'|y|^2+B'\right]\diff y\\
&\leq\frac{n}{\omega_{n}|x|^{n}}\int_{B_{2|x|}(0)}\left[h(y)+A'|y|^2+B'\right]\diff y\\
&=\frac{n}{\omega_{n}|x|^{n}}\int_0^{2|x|}\int_{\partial B_t(0)}\left[h(z)+A'|z|^2+B'\right]\diff S_z\diff t\\
&\leq\frac{n}{\omega_{n}|x|^{n}}\int_0^{2|x|}\omega_{n}\left[(A+A')t^2+(B+B')\right]t^{n-1}\diff t\\
&\leq 2^n(A+A')|x|^2+2^n(B+B').
\end{split}
\end{equation}
The first inequality of \eqref{ge4} holds by the mean value inequality. The second one holds because of \eqref{qg3}. And the third one holds due to \eqref{qg2}.

Hence $h^+(x)=\max\{h(x),0\}$ has at most a quadratic growth. Since $h(x)$ is convex, $h^-(x)=\max\{-h(x),0\}$ has at most a linear growth. In conclusion, $h$ has at most a quadratic growth.

Attributable to the convexity of $h$, for an arbitrary unit vector $\xi\in\mathbb{R}^{n}$ we have
\begin{equation*}
\xi\cdot Dh\left(x\right)\leq\frac{h\left(x+|x|\xi\right)-h\left(x\right)}{|x|}.
\end{equation*}
This implies that $\left|Dh(x)\right|$ has at most a linear growth.
\end{proof}
Suppose that $u$ satisfies the conditions of Theorem \ref{M}. According to Lemma \ref{lem2}, $S_u(r)$ has at most a quadratic growth and $\Phi_u(r)$ is bounded. Then by Lemma \ref{lem3}, $u(x)$ has at most a quadratic growth and $\left|Du(x)\right|$ has at most a linear growth. The next lemma states that $u^-(x)$ grows sublinearly.
\begin{lemma}\label{lem4}
Let $h(x)$ be a $C^1$ convex function on $\mathbb{R}^n$. Suppose that $S_h(r)$ has at most a quadratic growth. And assume that for a certain positive constant $\alpha$, the ball mean of $\exp\alpha(x\cdot Dh-2h)$ is bounded. Then 
\begin{equation}
\lim_{|x|\rightarrow +\infty}\frac{h^-(x)}{\sqrt{|x|}}=0\nonumber.
\end{equation}
\end{lemma}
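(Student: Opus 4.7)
The plan is to argue by contradiction. Assume $\limsup_{|x|\to\infty}h^-(x)/\sqrt{|x|}=:\delta>0$, so there exists a sequence $\{x_k\}\subset\mathbb{R}^n$ with $r_k:=|x_k|\to\infty$ and $h(x_k)\leq -(\delta/2)\sqrt{r_k}$. Lemma \ref{lem3} already supplies $|Dh(y)|\leq M(|y|+1)$ for some constant $M>0$. The core idea is to use the convexity of $h$ to propagate strong negativity of $h$ first along the segment from $0$ to $x_k$, and then into a thin tube around that segment, forcing the integrand $\exp\alpha(y\cdot Dh(y)-2h(y))$ to be super-polynomially large on a set whose volume is large enough to overwhelm the $r^n$ growth permitted by the ball-mean hypothesis.

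First, by convexity $h(tx_k)\leq(1-t)h(0)+th(x_k)$, so for $t\in[1/2,1]$ and $r_k$ large one obtains $h(tx_k)\leq -(\delta/8)\sqrt{r_k}$. Second, if $y$ lies within distance $\rho_k:=c/\sqrt{r_k}$ of some $tx_k$ on this sub-segment (with $c:=\delta/(48M)$), then $|y|\leq 2r_k$ and hence $|Dh|\leq 3Mr_k$ on the relevant region, so the mean value theorem yields $|h(y)-h(tx_k)|\leq 3Mc\sqrt{r_k}=(\delta/16)\sqrt{r_k}$ and therefore $h(y)\leq -(\delta/16)\sqrt{r_k}$. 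The resulting tube $T_k$ sits inside $B_{2r_k}(0)$ and, being essentially a cylinder of length $r_k/2$ with cross-section an $(n-1)$-ball of radius $\rho_k$, has Lebesgue measure at least $c_1 r_k\rho_k^{n-1}=c_2 r_k^{(3-n)/2}$.

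Finally, applying convexity between $0$ and $y\in T_k$ gives $h(0)\geq h(y)-y\cdot Dh(y)$, hence
\begin{equation*}
y\cdot Dh(y)-2h(y)\;\geq\;-h(y)-h(0)\;\geq\;\tfrac{\delta}{32}\sqrt{r_k}
\end{equation*}
for $r_k$ large. Comparing with the hypothesis (which gives $\int_{B_r(0)}\exp\alpha(y\cdot Dh-2h)\diff y\leq Cr^n$) applied on $B_{2r_k}(0)$ then yields
\begin{equation*}
2^nCr_k^n\;\geq\;\int_{B_{2r_k}(0)}\exp\alpha\bigl(y\cdot Dh(y)-2h(y)\bigr)\diff y\;\geq\;c_2\,r_k^{(3-n)/2}\exp\!\Bigl(\tfrac{\alpha\delta}{32}\sqrt{r_k}\Bigr),
\end{equation*}
which fails for $k$ large since the right-hand side grows super-polynomially in $r_k$.

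The sole delicate balancing act is the choice $\rho_k=c/\sqrt{r_k}$: the tube must be thin enough (proportional to $1/\sqrt{r_k}$) that the linear growth of $|Dh|$ does not wash out the $\sqrt{r_k}$-level negativity obtained along the segment, yet the resulting polynomial loss in tube volume is harmless because the factor $\exp(c'\sqrt{r_k})$ dominates any polynomial in $r_k$. This is also why the $\sqrt{|x|}$ scaling appears in the statement: it is precisely the geometric trade-off produced by a linearly growing gradient, and it is the best one can extract from these hypotheses by this kind of tube argument.
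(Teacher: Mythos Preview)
Your argument is correct and proceeds along essentially the same lines as the paper: contradiction via a sequence $\{x_k\}$, the linear gradient bound from Lemma~\ref{lem3}, the convexity inequality $y\cdot Dh(y)-2h(y)\geq -h(y)-h(0)$, and an exponential-beats-polynomial comparison against the ball-mean hypothesis. The only difference is that the paper works with a single small ball $B_{\rho_i}(x_i)$ of radius $\rho_i\sim |x_i|^{-1/2}$ in place of your tube along the segment $[x_k/2,x_k]$; this makes your intermediate step of propagating negativity along the segment by convexity unnecessary, a mild simplification, though your construction works equally well.
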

\begin{proof}
We proceed by contradiction. If the proposition is not true, then there exist a sequence $\{x_i\}_{i=1}^{\infty}\subset\mathbb{R}^n$ and a positive constant $c_2$ such that
\begin{equation}
h^-(x_i)\geq 3c_2|x_i|^{\frac{1}{2}},\,\,\,\,\,\,\mbox{and}\,\,\,\,\lim_{i\rightarrow+\infty}|x_i|=\infty. \nonumber
\end{equation}
According to Lemma \ref{lem3}, $|Dh|$ has at most a linear growth. Namely, there is a positive constant $c_3$ such that
\begin{equation}\label{nd0}
\left|D h(x)\right|\leq c_3\left|x\right|\quad\,\mbox{for $|x|\geq 1$.}
\end{equation}
Set $r_i=\frac{c_2}{c_3}\left|x_i\right|^{-\frac{1}{2}}$. Choose large enough $i$ for which $|x_i|\geq(\frac{c_2}{c_3})^2+2$. Then $r_i<1$. By \eqref{nd0} we have 
\begin{equation}\label{nd1}
h^-(x)\geq c_2\left|x_i\right|^{\frac{1}{2}}\,\,\,\,\,\mbox{for $x\in B_{r_i}\left(x_i\right)$.}
\end{equation}
It follows that
\begin{equation}\label{nd2}
\begin{split}
&\int_{B_{|x_i|+1}(0)}\exp\alpha\left[x\cdot Dh(x)-2h(x)\right]\diff x\\ \geq&\int_{B_{|x_i|+1}(0)}\exp\alpha\left[-h(x)-h(0)\right]\diff x\\
\geq&\exp\left[-\alpha h(0)\right]\int_{B_{r_i}(x_i)}\exp\left[\alpha h^-(x)\right]\diff x\\
\geq&\frac{\omega_{n}}{n}\left(\frac{c_2}{c_3}\right)^{n}\exp\left[-\alpha h(0)\right]\cdot\left|x_i\right|^{-\frac{n}{2}}\exp\left(c_2\alpha|x_i|^{\frac{1}{2}}\right).
\end{split}
\end{equation}
The first inequality of \eqref{nd2} holds due to the convexity of $h$. The second one holds because $B_{r_i}(x_i)\subset B_{|x_i|+1}(0)$. And the third one holds because of \eqref{nd1}. 

It follows that
\begin{align*}
&\lim_{i\rightarrow\infty}\frac{n}{\omega_n\left(\left|x_i\right|+1\right)^n}\int_{B_{|x_i|+1}(0)}\exp\alpha\left[x\cdot Dh(x)-2h(x)\right]\diff x\\
\geq&C\left(n,\alpha,h(0),c_2, c_3\right)\lim_{i\rightarrow\infty}\left|x_i\right|^{-\frac{3n}{2}}\exp\left(c_2\alpha\left|x_i\right|^{\frac{1}{2}}\right)=+\infty.
\end{align*}
This contradicts the assumption that the ball mean of $\exp\alpha(x\cdot Dh-2h)$ is bounded. So the proposition is true.\end{proof}

Because $\Phi_u(r)$ is bounded, from \eqref{s1} and \eqref{s2} we see that the ball mean of $\exp\alpha(x\cdot Du-2u)$ is bounded for $\alpha=\min\{1/2,1/2k_1\}$. Since $u$ is also convex, by Lemma \ref{lem4} we have
\begin{equation}
\lim_{|x|\rightarrow +\infty}\frac{u^-(x)}{\sqrt{|x|}}=0\nonumber.
\end{equation}

Having such estimates, we are in a position to construct a barrier function to prove the constancy of $\phi$.\\
\emph{Proof of Theorem \ref{HQ}.} Define the phase $\phi=\ln q_{n_1,n_2}\left(D^2u\right)$. By \eqref{HQeq}, we have
\begin{equation}\label{phase1}
\phi(x)=\frac{1}{2}x\cdot Du(x)-u(x).
\end{equation}
Taking two derivatives of \eqref{phase1}, we obtain
\begin{equation}
\phi_{ij}=\frac{1}{2}x^su_{ijs}.\label{diffphase1}
\end{equation}
Define the coefficients $a^{ij}\left(D^2u\right)$ by 
\begin{equation}
a^{ij}\left(D^2u\right)=\frac{\partial\ln q_{n_1,n_2}\left(D^2u\right)}{\partial u_{ij}}.\nonumber
\end{equation}
As shown above, $\left(a^{ij}\right)$ is positive-definite. A differentiation of \eqref{HQeq} with respect to $x^s$ yields
\begin{equation}\label{diffu1}
a^{ij}u_{ijs}=\phi_{s}.
\end{equation}
Combing \eqref{diffphase1} and \eqref{diffu1}, we get
\begin{equation}
a^{ij}\phi_{ij}-\frac{1}{2}x\cdot D\phi=0.\label{Lphase1}
\end{equation}
Thus $\phi$ satisfies an elliptic equation without zeroth order term (cf. \cite{CCY,H}). 

Define the corresponding elliptic operator by
\begin{equation*}
\mathcal{L}:=a^{ij}\partial^2_{ij}-\frac{1}{2}x\cdot D.
\end{equation*}
By \eqref{HQ2}, we have
\begin{equation}
a^{ij}u_{ij}=n_1-n_2.\nonumber
\end{equation}
For simplicity, denote $n_1-n_2$ by $N$. It follows that
\begin{equation*}
\mathcal{L}u= N-\frac{1}{2}x\cdot Du.
\end{equation*}
Define $\hat u(x)=u(x)-Du(0)\cdot x$.
Since $u$ is strictly convex, $\hat u$ is proper. And we have 
\begin{equation}\label{Lu1}
\mathcal{L}\hat u=N-\frac{1}{2}x\cdot D\hat u.
\end{equation}

Set $M=|u(0)|+1$. Define $l(x)$ by
\begin{equation}
l(x)=x\cdot Du(x)-u(x)+M. \nonumber
\end{equation}
Note that $l(x)\geq1$, and 
\begin{equation}
\mathcal{L}l=2\mathcal{L}\phi+\mathcal{L}u=N-\frac{1}{2}x\cdot Du.\nonumber
\end{equation}
Define $g(x)$ by
\begin{equation}
g(x)=\ln\left[\hat u(x)+M\right].\nonumber
\end{equation}
Note that $g(x)\geq 0$, and
\begin{equation}
\mathcal{L}g=\frac{\mathcal{L}\hat u}{\hat u+M}-\frac{a^{ij}\hat u_i\hat u_j}{\left(\hat u+M\right)^2}\leq\frac{1}{\hat  u+M}\left(N-\frac{1}{2}x\cdot D\hat u\right). \nonumber
\end{equation}
Then there holds
\begin{equation}\label{Llg1}
\begin{split}
\mathcal{L}\left(lg\right)=&l\mathcal{L}g+2a^{ij}g_il_j+g\mathcal{L}l\\
\leq&\frac{l}{\hat u+M}\left(N-\frac{1}{2}x\cdot D\hat u\right)+\frac{2\hat u_ia^{ij}u_{js}x^s}{\hat u+M}+\left(N-\frac{1}{2}x\cdot Du\right)\cdot\ln\left(\hat u+M\right).
\end{split}
\end{equation}
Denote the three terms on the right-hand side of \eqref{Llg1} by $I_1$, $I_2$ and $I_3$ respectively.

As talked above, $u$ has at most a quadratic growth and $|Du|$ has at most a linear growth. As well, $\hat u$ has at most a quadratic growth and $|D\hat u|$ has at most a linear growth. More precisely, there exists a positive constant $K_1$ such that
\begin{equation}\label{e11}
\hat u(x)+M\leq K_1|x|^2\,\,\,\,\,\mbox{for $|x|\geq 1$,}
\end{equation}
\begin{equation}\label{e21}
|D\hat u(x)|\leq K_1|x|\,\,\,\,\,\mbox{for $|x|\geq 1$}. 
\end{equation}
Attributable to the convexity and properness of $\hat u$, there is a positive constant $K_2$ such that
\begin{equation}\label{e31}
\hat u(x)+M\geq K_2|x|\,\,\,\,\,\mbox{for $|x|\geq 1$},
\end{equation}
\begin{equation}\label{e41}
x\cdot D\hat u(x)\geq K_2|x|\,\,\,\,\,\mbox{for $|x|\geq 1$}.
\end{equation}
As shown in Lemma \ref{lem4}, $u^-(x)=o(|x|^{\frac{1}{2}})$ as $x\rightarrow\infty$. Namely there exists a positive constant $K_3$ such that
\begin{equation}\label{e51‘}
u(0)-u(x)\leq K_3|x|^{\frac{1}{2}}\,\,\,\,\,\mbox{for $|x|\geq 1$}.
\end{equation}
By convexity, $-x\cdot Du(x)\leq u(0)-u(x)$. Thus we get
\begin{equation}\label{e51}
-x\cdot Du(x)\leq K_3|x|^{\frac{1}{2}}\,\,\,\,\,\mbox{for $|x|\geq 1$}.
\end{equation}
Since $l(x)\geq 1$, from \eqref{e41} we see when $|x|$ is large enough, 
\begin{equation}\label{e61}
I_1=\frac{l}{\hat u+M}\left(N-\frac{1}{2}x\cdot D\hat u\right)\leq 0.
\end{equation}
Define $E(x)=\hat u_i\left(x\right)a^{ij}\left(x\right)u_{js}\left(x\right)x^s$. By \eqref{HQ3} we have
\begin{equation}\label{e01}
E(x)\leq\tr\left(a^{ij}u_{js}\right)\cdot|x|\cdot\left|D\hat u\right|=N|x|\left|D\hat u\left(x\right)\right|.
\end{equation}
Then it follows from \eqref{e01}, \eqref{e21} and \eqref{e31} that for $|x|\geq 1$,
\begin{equation}\label{e71}
I_2=\frac{2E(x)}{\hat u+M}\leq 2NK_1K^{-1}_2|x|.
\end{equation}
According to \eqref{e11} and \eqref{e51}, when $|x|\geq 1$ we have
\begin{equation}\label{e81}
I_3=\left(N-\frac{1}{2}x\cdot Du\right)\cdot\ln\left(\hat u+M\right)\leq\left(\frac{K_3}{2}|x|^{\frac{1}{2}}+N\right)\left(2\ln|x|+\ln K_1\right).
\end{equation}
Substituting \eqref{e61}, \eqref{e71} and \eqref{e81} into \eqref{Llg1}, for large enough $|x|$, we have 
\begin{equation}
\mathcal{L}\left(lg\right)\leq 2NK_1K^{-1}_2|x|+K_3|x|^{\frac{1}{2}}(\ln|x|+\ln K_1)+2N\left(\ln|x|+\ln K_1\right).\nonumber
\end{equation}
Equations \eqref{Lu1} and \eqref{e41} then imply there exist $R_0\geq1$ and a large enough positive constant $K_4$ such that
\begin{equation}
\mathcal{L}\left(lg+K_4\hat u\right)\leq 0\,\,\,\,\,\mbox{when $|x|\geq R_0$}.\nonumber
\end{equation}

For any $\varepsilon>0$, we take a barrier function $\displaystyle w(x)$ defined by
\begin{equation}
w(x)=\varepsilon\left\{l\left(x\right)g\left(x\right)+K_4\left[\hat u(x)+M\right]\right\}+\max_{\partial B_{R_0}}\phi. \nonumber
\end{equation}
Clearly we have
\begin{equation}
\mathcal{L}w\leq 0=\mathcal{L}\phi\,\,\,\,\,\mbox{for $|x|\geq R_0$},\nonumber
\end{equation}
and
\begin{equation}
w(x)\geq\phi(x)\,\,\,\,\,\mbox{on $\partial B_{R_0}$}.\nonumber
\end{equation}
The last thing to check is
\begin{equation}
w(x)>\phi(x)\,\,\,\,\,\mbox{as $|x|\rightarrow+\infty$}.\nonumber
\end{equation}

We claim that above inequality holds when
\begin{equation}
|x|\geq\frac{1}{K_2}\exp{\frac{1}{\varepsilon}}+\left(\frac{2K_3}{K_2K_4\varepsilon}\right)^2+\frac{2}{K_2K_4\varepsilon}\left|\max_{\partial B_{R_0}}\phi\right|+R_0. \nonumber
\end{equation}
By \eqref{e31} we have
\begin{equation}\label{l11}
\frac{\varepsilon K_4}{2}\left[\hat u(x)+M\right]>\left|\max_{\partial B_{R_0}}\phi\right|,
\end{equation}
and
\begin{equation}\label{l31}
\varepsilon g(x)>1.
\end{equation}
Simple calculation yields
\begin{equation}\label{l21}
\frac{\varepsilon}{2} K_2K_4|x|>K_3|x|^{\frac{1}{2}}.
\end{equation}
Next we discuss the following two cases.

Case 1. $\phi(x)<\frac{\varepsilon}{2}K_2K_4|x|$. Directly from \eqref{e31} and \eqref{l11} we see
\begin{equation}
\phi (x)<\frac{\varepsilon K_4}{2}\left[\hat u(x)+M\right]\leq w(x). \nonumber
\end{equation}

Case 2. $\phi(x)\geq\frac{\varepsilon}{2}K_2K_4|x|$. By convexity, \eqref{e51‘} and \eqref{l21} we get
\begin{equation}
\begin{split}
\frac{1}{2}x\cdot Du&\geq\frac{\varepsilon}{2} K_2K_4|x|+u(x)\\
&\geq\frac{\varepsilon}{2}K_2K_4|x|-K_3|x|^{\frac{1}{2}}+u(0)\\
&\geq u(0).\nonumber
\end{split}
\end{equation}
Thus
\begin{equation}\label{l41}
l(x)-\phi(x)=\frac{1}{2}x\cdot Du+M\geq u(0)+M>0.
\end{equation}
Combing \eqref{l11}, \eqref{l31} and \eqref{l41}, we also have $w(x)>\phi(x)$.

The weak maximum principle then implies 
\begin{equation}
\varepsilon\left\{l\left(x\right)g\left(x\right)+K_4\left[\hat u(x)+M\right]\right\}+\max\limits_{\partial B_{R_0}}\phi\geq\phi(x)\,\,\,\,\,\mbox{for all $x\in\mathbb{R}^{n}\backslash B_{R_0}$}.\nonumber
\end{equation}
Letting $\varepsilon\rightarrow 0$, we obtain
\begin{equation}
\max\limits_{\partial B_{R_0}}\phi\geq\phi(x)\,\,\,\,\,\mbox{for all $x\in\mathbb{R}^{n}\backslash B_{R_0}$}.\nonumber
\end{equation}
So $\phi$ attains its global maximum in the closure of $B_{R_0}$. Hence $\phi$ is a constant by the strong maximum principle. Using $\phi=\frac{1}{2}x\cdot Du-u$, we have 
\begin{equation}
\frac{1}{2}x\cdot D\left[u(x)+\phi(0)\right]=u(x)+\frac{1}{2}\phi(0).\nonumber
\end{equation}

Finally, it follows from Euler's homogeneous function theorem that smooth $u(x)+\phi(0)/2$ is a homogeneous order $2$ polynomial. 
\qed

\section{Proof of Theorem \ref{KR1} and Theorem \ref{KR2}}
The whole proof of Theorem \ref{HQ} can be copied here except inequality \eqref{e01}. Actually, we only need to prove a \eqref{e01}-type inequality under the new conditions. For convenience and clarity, for the corresponding objects we use the same notations as in the proof of Theorem \ref{HQ}. 
\begin{proof}
When the eigenvalues of $\partial\bar\partial u$ are comparable, namely inequality \eqref{KRc1} holds, for any $i, s$ we have
\begin{equation*}
\left|\sum_{i=1}^{2n}a^{ij}(x)u_{js}(x)\right|<\frac{\Delta u(x)}{4\mu_{\min}(x)}\leq\frac{n\mu_{\max}(x)}{\mu_{\min}(x)}\leq n\Lambda.
\end{equation*}
So $E(x)\leq n\Lambda\left|x\right|\left|D\hat u(x)\right|$, where $\hat u(x)=u(x)-Du(0)\cdot x$.

Now we talk about the toric case. Since $u$ is invariant under $\mathbb{T}^n$-actions, we have $Du(0)=0$, $\hat u(x)$=$u(x)$. And $u(x)$ can be reduced to a function $f(r^1,...,r^n)$ depending only on each polar radius $r^i=\left|x^i+\sqrt{-1}x^{n+i}\right|$. Simple calculation gives:
\begin{equation*}
u_{i}=f_i\cdot\frac{x^i}{r^i},\,\,\,\,\,u_{n+i}=f_i\cdot\frac{x^{n+i}}{r^i}
\end{equation*}
for $1\leq i\leq n$, and
\begin{equation*}
u_{ij}=f_{ij}\cdot\frac{x^ix^j}{r^ir^j}+f_i\cdot\delta_{ij}\cdot\frac{(x^{n+i})^2}{(r^i)^3},
\end{equation*}
\begin{equation*}
u_{i,n+j}=f_{ij}\cdot\frac{x^ix^{n+j}}{r^ir^j}-\delta_{ij}\cdot f_i\cdot\frac{x^ix^{n+j}}{(r^i)^3},
\end{equation*}
\begin{equation*}
u_{n+i,n+j}=f_{ij}\cdot\frac{x^{n+i}x^{n+j}}{r^ir^j}+f_i\cdot\delta_{ij}\cdot\frac{(x^i)^2}{(r^i)^3}
\end{equation*}
for $1\leq i,j\leq n$.
So at $x=\left(r,0\right)$ where $r=(r^1,...,r^n)$, we have $Du=\left(Df\left(r\right),0\right)$ and
\begin{align*}
D^2u=\begin{pmatrix}
\displaystyle D^2f\left(r\right) & 0\\
0 & \Omega\left(r\right)
\end{pmatrix},
\end{align*}
where $\Omega\left(r\right)=\diag\left(\frac{f_1}{r^1},...,\frac{f_n}{r^n}\right)$.

As noted in the introduction, $\left(a^{ij}\right)=\left(D^2u-J\cdot D^2u\cdot J\right)^{-1}$.
$E(x)$ can be viewed as the matrix product 
\begin{equation*}
E(x)=\left(Du\right)^{\mathrm{T}}\cdot\left(D^2u-J\cdot D^2u\cdot J\right)^{-1}\cdot D^2u\cdot x,
\end{equation*}
where $\left(Du\right)^{\mathrm{T}}$ is the transpose of $Du$.

Since $u$ is $\mathbb{T}^n$-invariant and $J$ is an infinitesimal generator of $\mathbb{T}^n$-actions, $E(x)$ is $\mathbb{T}^n$-invariant. So $E(x)=E\left(r,0\right)$. Then it follows that
\begin{align*}
E\left(r,0\right)=&(Df)^{\mathrm{T}}\cdot\left(D^2f+\Omega\right)^{-1}\cdot D^2f\cdot r\\
=&(Df)^{\mathrm{T}}\cdot r-(Df)^{\mathrm{T}}\cdot\left(D^2f+\Omega\right)^{-1}\cdot\Omega\cdot r\\
=&(Df)^{\mathrm{T}}\cdot r-(Df)^{\mathrm{T}}\cdot\left(D^2f+\Omega\right)^{-1}\cdot Df\\
\leq &\left(Df\right)^{\mathrm{T}}\cdot r.
\end{align*}
Consequently, we have $E(x)\leq x\cdot Du(x)$. 
\end{proof}

\textbf{Aknowledgement}

I would like to sincerely thank Professor Yu Yuan for suggesting this problem to me and for many stimulating discussions. I am grateful to my advisor Professor Yuguang Shi for encouragement and useful advices. Most of this work was done when I was visiting the University of Washington. I would also like to thank CSC (China Scholarship Council) for its support and the University of Washington for its hospitality. Finally, I express my gratitude to the referees for pointing out some typos and for many useful  comments and suggestions.

\textbf{Funding}

This work was partially supported by the China Scholarship Council [201406010009] and the National Natural Science Foundation of China [11671015].


\begin{thebibliography}{DG}
%
%

\bibitem{CNS} Caffarelli L., Nirenberg L., Spruck J., \emph{The Dirichlet problem for nonlinear second-order elliptic equations, III: Functions of the eigenvalues of the Hessian}, Acta Math. 155 (1985), no. 3-4, 261--301. 

\bibitem{CCH} Chau, A., Chen J.Y., He W., \emph{Entire self-similar solutions to Lagrangian mean curvature flow}, arXiv:0905.3869.

\bibitem{CCY} Chau A., Chen J.Y., Yuan Y., \emph{Rigidity of entire self-shrinking solutions to curvature flows}, J. Reine Angew. Math., \textbf{664} (2012), 229--239.

\bibitem{CM} Colding T.H., Minizozzi II W.P., \emph{Generic mean curvature flow I; generic singularities}, Ann. of Math., \textbf{175}\textbf{(2)} (2012), 755-833.

\bibitem{DX} Ding Q., Xin Y.L., \emph{The rigidity theorems for Lagrangian self shrinkers}, J. Reine Angew. Math., 692 (2014): 109-123.

\bibitem{DLY} Drugan G., Lu P., Yuan Y., \emph{Rigidity of complete entire self-shrinking solutions to K\"{a}hler-Ricci flow}, Int. Math. Res. Not., IMRN 2015, no. 12, 3908-3916. 


\bibitem{HLP} Hardy G.H., Littlewood J.E., Polya G., \emph{Inequalities}, Camb. Univ. Press, Cambridge, 1978.

\bibitem{H} Huang R.L., \emph{Lagrangian mean curvature flow in pseudo-Euclidean space}, Chin. Ann. Math. Ser. B, \textbf{32}\textbf{(2)} (2011), 187-200.

\bibitem{HW} Huang R.L., Wang Z.Z., \emph{On the entire self-shrinking solution to Lagrangian mean curvature}, Calc. Var. and PDE., \textbf{41} (2011), 321--339.

\bibitem{Huisken} Huisken G., \emph{Asymptotic behavior for singularities of the mean curvature flow}, 
J. Diff. Geom., \textbf{31} (1990), 285-299.

\bibitem{S} Smoczyk K., \emph{Longtime existence of the Lagrangian mean curvature flow}, Calc. Var. and PDE., \textbf{20} (2004), 25-46.

\bibitem{TW} Trudinger N.S., Wang X.J., \emph{A Poincar\'{e} type inequality for Hessian integrals}, Calc. Var. and PDE., \textbf{6} (1988), 315-328.

\bibitem{T} Tso K., \emph{On a real Monge-Amp\`{e}re functional}, Invent. Math., \textbf{101}\textbf{(1)} (1990), 425-448.

\bibitem{WW} Wang W.L., \emph{Rigidity of entire self-shrinking solutions to K\"{a}hler-Ricci flow on complex plane}, Proc. Amer. Math. Soc., 145 (2017), no. 7, 3105–3108.

\bibitem{WX} Wang, X.J., \emph{The k-Hessian Equation}, Geometric Analysis and PDEs, 177-252, Lecture Notes in Mathematics, 1977. Springer, Dordrecht, 2009, ISBN 978-3-642-01673-8.

\end{thebibliography}


\end{document}